\newtheorem{theorem}{Theorem}
\newtheorem{lemma}{Lemma}
\newtheorem{corollary}{Corollary}
\theoremstyle{definition}
\newtheorem{remark}{Remark}
\begin{document}

\begin{center}
{\bf On the quantity $I(q^k) + I(n^2)$ where \\
$q^k n^2$ is an odd perfect number - Part II}
\vskip 20pt
{\bf Keneth Adrian Precillas Dagal}\\
{\tt Nasser Vocational Training Centre, Bahrain}\\
{\tt kendee2012@gmail.com}\\ 
\vskip 10pt
{\bf Jose Arnaldo Bebita Dris\footnote{Corresponding author}}\\
{\tt M.~Sc.~Graduate, De La Salle University, Manila, Philippines}\\
{\tt josearnaldobdris@gmail.com}\\ 
\end{center}
\vskip 20pt

\vskip 30pt

\centerline{\bf Abstract}
\noindent In this note, we continue an approach pursued in an earlier paper of the second author and thereby attempt to produce an improved lower bound for the sum $I(q^k) + I(n^2)$, where $q^k n^2$ is an odd perfect number with special prime $q$ and $I(x)$ is the abundancy index of the positive integer $x$.  In particular, this yields an upper bound for $k$.

\baselineskip=12.875pt
\vskip 30pt

\section{Introduction} 

Let $x$ be a positive integer.  We denote the sum of the divisors of $x$ by
$$\sigma(x) = \sum_{d \mid x}{d}.$$
We also denote the deficiency of $x$ by
$$D(x) = 2x - \sigma(x),$$
the sum of the aliquot or proper divisors of $x$ by
$$s(x) = \sigma(x) - x,$$
and the abundancy index of $x$ by $I(x)=\sigma(x)/x$.

Note that both $\sigma$ and $I$ are multiplicative. In general we have the inequalities
$$\sigma(yz) \leq \sigma(y)\sigma(z)$$
and
$$I(yz) \leq I(y)I(z)$$
for $\sigma$ and $I$.  Equality holds if and only if $\gcd(y,z)=1$.  Lastly, note that although the deficiency function $D$ is not multiplicative, it is in general true that the inequality
$$D(yz) \leq D(y)D(z)$$
holds whenever $\gcd(y,z)=1$.  This means that the deficiency function is submultiplicative.  In fact, the following formula relating the deficiency function and the sum-of-aliquot-divisors function holds for all $u$ and $v$ such that $\gcd(u,v)=1$:
$$D(u)D(v) - D(uv) = 2s(u)s(v).$$
(Note that this formula can be proven via a direct, ad hoc computation.)

If $m$ is odd and $\sigma(m)=2m$, then $m$ is called an odd perfect number.  Euler proved that an odd perfect number, if one exists, must have the form $m = q^k n^2$ where $q$ is the special prime satisfying $q \equiv k \equiv 1 \pmod 4$ and $\gcd(q,n)=1$.  Note that we have
$$\sigma(q^k)\sigma(n^2)=\sigma(q^k n^2)=\sigma(m)=2m=2 q^k n^2$$
so that we obtain
$$\frac{\sigma(n^2)}{q^k}=\frac{2n^2}{\sigma(q^k)}=\gcd(n^2,\sigma(n^2))=\frac{D(n^2)}{s(q^k)}=\frac{2s(n^2)}{D(q^k)}.$$

Descartes (1638) and Frenicle (1657) conjectured that $k=1$ always holds \cite{B}.  More recently, Sorli (2003) predicts $k=1$ after testing large numbers with eight distinct prime factors for perfection.  To date, no proof of the Descartes-Frenicle-Sorli Conjecture on odd perfect numbers is known, although various equivalent conditions have been derived by Dris (2017) \cite{D3}, and Dris and Tejada (2018) \cite{DT}.

In this note, we continue an approach pursued in an earlier paper by Dris (2020) \cite{D1}, building on previous work from the second author's masters thesis (2008) \cite{D5}, and thereby attempt to produce stronger bounds for the sum $I(q^k) + I(n^2)$.  Currently, we know by Dris (2012) \cite{D4} that
$$\frac{57}{20} < I(q^k) + I(n^2) < 3$$
and that these bounds are best-possible.

We also know that
$$\frac{q+1}{q} \leq I(q^k) < \frac{q}{q-1} < \frac{2(q-1)}{q} < I(n^2) \leq \frac{2q}{q+1}$$
from which we get
$$\bigg(I(q^k) - \frac{q}{q-1}\bigg)\bigg(I(n^2) - \frac{q}{q-1}\bigg) < 0$$
and
$$\bigg(I(q^k) - \frac{q+1}{q}\bigg)\bigg(I(n^2) - \frac{q+1}{q}\bigg) \geq 0.$$
Using the fact that $I(q^k)I(n^2)=I(q^k n^2)=2$, we obtain
$$\frac{2(q-1)}{q} + \frac{q}{q-1} < I(q^k) + I(n^2) \leq \frac{2q}{q+1} + \frac{q+1}{q}.$$
Notice that the lower bound equals
$$L(q) = \frac{2(q-1)}{q} + \frac{q}{q-1} = \frac{3q^2 - 4q + 2}{q(q-1)} = 3 - \bigg(\frac{q-2}{q(q-1)}\bigg)$$
and that the upper bound equals
$$U(q) = \frac{2q}{q+1} + \frac{q+1}{q} = \frac{3q^2 + 2q + 1}{q(q+1)} = 3 - \bigg(\frac{q-1}{q(q+1)}\bigg).$$
Equality holds in $L(q) < I(q^k) + I(n^2) \leq U(q)$ if and only if the Descartes-Frenicle-Sorli Conjecture on odd perfect numbers holds.

In the succeeding sections, we shall see how to successfully improve the lower bound for $I(q^k) + I(n^2)$.  This will translate to an upper bound for $k$.  In an e-mail sent to the second author in 2007, Iannucci asserted that an upper bound for $k$ ``would be a most helpful result, but is difficult to obtain".

\section{On $D(q^k)D(n^2)=2s(q^k)s(n^2)$ - Continued}

Since $\gcd(q^k,\sigma(q^k))=1$, we obtain the series of equations
$$\frac{\sigma(n^2)}{q^k}=\frac{2n^2}{\sigma(q^k)}=\gcd(n^2,\sigma(n^2))=\frac{D(n^2)}{s(q^k)}=\frac{2s(n^2)}{D(q^k)}$$
from which we obtain
$$D(q^k)D(n^2)=2s(q^k)s(n^2),$$
which was the primary object of investigation in the paper \cite{D1}.

We recall the following lemma from \cite{D1}, and hereby produce a shorter proof for the same.

\begin{lemma}\label{CommonValue}
If $m = q^k n^2$ is an odd perfect number with special prime $q$, then
$$D(q^k)D(n^2)=2s(q^k)s(n^2)=\frac{2n^2(q^k-1)(q^{k+1}-2q^k+1)}{(q-1)(q^{k+1}-1)}.$$ 
\end{lemma}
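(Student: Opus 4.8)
The plan is to proceed by direct computation, exploiting the closed form for the sum of divisors of a prime power together with the chain of equalities displayed immediately above the lemma. The first equality $D(q^k)D(n^2) = 2s(q^k)s(n^2)$ requires no new work: it is an immediate consequence of the relation
$$\frac{\sigma(n^2)}{q^k}=\frac{2n^2}{\sigma(q^k)}=\gcd(n^2,\sigma(n^2))=\frac{D(n^2)}{s(q^k)}=\frac{2s(n^2)}{D(q^k)},$$
which in turn follows from $\sigma(q^k)\sigma(n^2) = 2q^k n^2$ and $\gcd(q^k,\sigma(q^k))=1$. So the only genuine content of the lemma is the evaluation of this common value.

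First I would record the elementary identities $\sigma(q^k) = (q^{k+1}-1)/(q-1)$, and hence
$$s(q^k) = \sigma(q^k) - q^k = \frac{q^k - 1}{q-1}, \qquad D(q^k) = 2q^k - \sigma(q^k) = \frac{q^{k+1} - 2q^k + 1}{q-1}.$$
Next, from the chain of equalities I would solve for $D(n^2)$ and $s(n^2)$: since $D(n^2)/s(q^k) = 2n^2/\sigma(q^k)$ and $2s(n^2)/D(q^k) = 2n^2/\sigma(q^k)$, we obtain
$$D(n^2) = \frac{2n^2\, s(q^k)}{\sigma(q^k)} = \frac{2n^2(q^k-1)}{q^{k+1}-1}, \qquad s(n^2) = \frac{n^2\, D(q^k)}{\sigma(q^k)} = \frac{n^2(q^{k+1}-2q^k+1)}{q^{k+1}-1}.$$
Finally, multiplying $D(q^k)$ by $D(n^2)$ (equivalently $2s(q^k)$ by $s(n^2)$) and simplifying yields the stated closed form: the denominators combine to $(q-1)(q^{k+1}-1)$ and the numerator to $2n^2(q^k-1)(q^{k+1}-2q^k+1)$.

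There is essentially no obstacle here — every step is forced — so the only thing to watch is the bookkeeping of the factorization $\sigma(q^k)(q-1) = q^{k+1}-1$ and the cancellation of $(q-1)$ from numerator and denominator; organizing the computation as ``write each of $\sigma(q^k), s(q^k), D(q^k)$ over the common denominator $q-1$, then divide by $\sigma(q^k)$'' keeps the algebra transparent. This is presumably where the present argument is shorter than the one in \cite{D1}: instead of invoking the submultiplicativity identity $D(u)D(v) - D(uv) = 2s(u)s(v)$, one simply reads the value off the chain of ratios.
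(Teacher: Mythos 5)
Your proof is correct and follows essentially the same route as the paper: both compute $D(q^k)$ and $D(n^2)$ explicitly from the displayed chain of ratios and then multiply. The only cosmetic difference is that you obtain the equality $D(q^k)D(n^2)=2s(q^k)s(n^2)$ by cross-multiplying $\frac{D(n^2)}{s(q^k)}=\frac{2s(n^2)}{D(q^k)}$, whereas the paper invokes the identity $D(u)D(v)-D(uv)=2s(u)s(v)$ with $D(q^k n^2)=0$; both justifications are immediate.
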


\begin{proof} 
Let $m= q^k n^2$ be an odd perfect number with special prime $q$.

Since $q$ is prime,
$$D(q^k)=2q^k-\sigma(q^k)=2q^k-\bigg(\frac{q^{k+1}-1}{q-1}\bigg)=\frac{2q^{k+1}-2q^k-q^{k+1}+1}{q-1}=\frac{q^{k+1}-2q^k+1}{q-1}.$$
Additionally, from the series of equations above, we obtain
$$D(n^2)=\frac{2n^2 s(q^k)}{\sigma(q^k)}=\frac{2n^2 (q^k - 1)}{q^{k+1} - 1}.$$

Finally, we get
$$D(q^k)D(n^2)=\bigg(\frac{q^{k+1}-2q^k+1}{q-1}\bigg)\cdot\bigg(\frac{2n^2 (q^k - 1)}{q^{k+1} - 1}\bigg)$$
and the result follows by commutativity, and from the equation
$$D(u)D(v) - D(uv) = 2s(u)s(v),$$
where we set $u=q^k$ and $v=n^2$, noting that $\gcd(u,v)=\gcd(q^k, n^2)=\gcd(q,n)=1$ and that $D(q^k n^2)=0$ since $m = q^k n^2$ is perfect.
\end{proof}

We restate the following theorem from the paper \cite{D1}, without proof, as it can be directly proved using Lemma \ref{CommonValue}.

\begin{theorem}\label{MainResult1}
If $m = q^k n^2$ is an odd perfect number with special prime $q$, then
$$3 - (I(q^k) + I(n^2)) = \frac{(q^k-1)(q^{k+1}-2q^k+1)}{{q^k}(q-1)(q^{k+1}-1)}.$$
\end{theorem}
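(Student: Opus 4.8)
The plan is to avoid manipulating the abundancy indices directly and instead route Lemma \ref{CommonValue} through the elementary identity $D(x)/x = 2 - I(x)$, which is immediate from $D(x) = 2x - \sigma(x)$. Applying this with $x = q^k$ and with $x = n^2$ and multiplying the two identities, I would obtain
$$\frac{D(q^k)D(n^2)}{q^k n^2} = \big(2 - I(q^k)\big)\big(2 - I(n^2)\big).$$

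Next I would expand the right-hand side and invoke the perfectness of $m$. Since $\gcd(q,n)=1$, multiplicativity of $I$ gives $I(q^k)I(n^2) = I(q^k n^2) = I(m) = 2$, so
$$\big(2 - I(q^k)\big)\big(2 - I(n^2)\big) = 4 - 2\big(I(q^k) + I(n^2)\big) + 2 = 2\Big(3 - \big(I(q^k) + I(n^2)\big)\Big).$$
Dividing by $2$ yields
$$3 - \big(I(q^k) + I(n^2)\big) = \frac{D(q^k)D(n^2)}{2\,q^k n^2}.$$

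Finally I would substitute the closed form from Lemma \ref{CommonValue}. Dividing the identity
$$D(q^k)D(n^2) = \frac{2n^2(q^k-1)(q^{k+1}-2q^k+1)}{(q-1)(q^{k+1}-1)}$$
by $2\,q^k n^2$ cancels the factor $2n^2$ and produces exactly
$$\frac{(q^k-1)(q^{k+1}-2q^k+1)}{q^k(q-1)(q^{k+1}-1)},$$
which is the claimed expression, completing the proof.

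There is no substantive obstacle here; the only points requiring care are (i) using the perfectness hypothesis in precisely one place, namely to replace $I(q^k)I(n^2)$ by $2$ (equivalently, one could invoke $D(q^k n^2)=0$), and (ii) checking that the factor $2n^2$ appearing in Lemma \ref{CommonValue} divides out cleanly against $2\,q^k n^2$, so that no spurious powers of $q$ or $n$ remain. As a sanity check, one could also carry out the more pedestrian route: write $I(q^k) = (q^{k+1}-1)/(q^k(q-1))$ and, from the chain of equations in Section~2, $I(n^2) = 2q^k/\sigma(q^k) = 2q^k(q-1)/(q^{k+1}-1)$, place $3 - (I(q^k)+I(n^2))$ over the common denominator $q^k(q-1)(q^{k+1}-1)$, and verify the polynomial identity $3q^k(q-1)(q^{k+1}-1) - (q^{k+1}-1)^2 - 2q^{2k}(q-1)^2 = (q^k-1)(q^{k+1}-2q^k+1)$ by expansion. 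The deficiency argument above is shorter and, as the paper indicates, reuses Lemma \ref{CommonValue} essentially verbatim.
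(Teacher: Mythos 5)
Your proof is correct, and it matches the route the paper intends: the paper states the theorem without proof as a direct consequence of Lemma \ref{CommonValue}, and in Section 3 it records the same identity $\frac{D(q^k)D(n^2)}{2q^k n^2}=(I(q^k)-1)(I(n^2)-1)=3-(I(q^k)+I(n^2))$ that you derive (your use of $(2-I(q^k))(2-I(n^2))$ with $D(q^k)D(n^2)$ is the same computation as the paper's use of $(I(q^k)-1)(I(n^2)-1)$ with $2s(q^k)s(n^2)$, since $D(q^k)D(n^2)=2s(q^k)s(n^2)$). No gaps; the algebra checks out.
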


\section{Attempting to improve the bounds \\ 
for $I(q^k) + I(n^2)$}

Since $q$ is prime and $m = q^k n^2$ is perfect, we obtain
$$\frac{q+1}{q} \leq I(q^k) < \frac{q}{q-1}$$
so that we get
$$\frac{2(q-1)}{q} < I(n^2) = \frac{2}{I(q^k)} \leq \frac{2q}{q+1}.$$
Using the identity
$$\frac{D(x)}{x} = 2 - I(x)$$
we obtain the bounds
$${q^k}\bigg(\frac{q-2}{q-1}\bigg) < D(q^k) \leq {q^{k-1}}\bigg(q-1\bigg)$$
and
$$\frac{2n^2}{q+1} \leq D(n^2) < \frac{2n^2}{q}.$$
This implies that
$${2q^k n^2}\cdot\bigg(\frac{q-2}{(q-1)(q+1)}\bigg) < D(q^k)D(n^2) < {2q^k n^2}\cdot\bigg(\frac{q-1}{q^2}\bigg).$$
Dividing both sides of the last inequality by $2q^k n^2$, we get
$$\frac{q-2}{(q-1)(q+1)} < \frac{D(q^k)D(n^2)}{2q^k n^2} < \frac{q-1}{q^2}.$$
Thus, the fraction in the middle of the last inequality simplifies to
$$\frac{D(q^k)D(n^2)}{2q^k n^2}=\bigg(\frac{s(q^k)}{q^k}\bigg)\bigg(\frac{s(n^2)}{n^2}\bigg)=\bigg(I(q^k) - 1\bigg)\bigg(I(n^2) - 1\bigg) = 3 - \bigg(I(q^k) + I(n^2)\bigg).$$

We therefore finally have the bounds
$$3 - \bigg(\frac{q-1}{q^2}\bigg) < I(q^k) + I(n^2) < 3 - \bigg(\frac{q-2}{(q-1)(q+1)}\bigg)$$
which does not improve on the known bounds
$$3 - \bigg(\frac{q-2}{q(q-1)}\bigg) < I(q^k) + I(n^2) \leq 3 - \bigg(\frac{q-1}{q(q+1)}\bigg).$$

While this previous attempt is unsuccessful, let us see what we could obtain from the following equation:

\begin{lemma}\label{GCD}
If $q^k n^2$ is an odd perfect number with special prime $q$, then
$$\gcd(n^2, \sigma(n^2)) = q\sigma(n^2) - 2(q - 1)n^2.$$
\end{lemma}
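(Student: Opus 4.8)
The plan is to work directly from the chain of equalities recalled at the start of Section 2, namely
$$\frac{\sigma(n^2)}{q^k}=\frac{2n^2}{\sigma(q^k)}=\gcd(n^2,\sigma(n^2)),$$
and to abbreviate $G := \gcd(n^2,\sigma(n^2))$. Comparing the first and third members gives $\sigma(n^2) = G q^k$, and comparing the second and third members gives $2n^2 = G\sigma(q^k)$. These two identities express both $\sigma(n^2)$ and $2n^2$ as explicit multiples of $G$, which is exactly the leverage needed to collapse the claimed right-hand side.

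Next I would substitute these into the expression $q\sigma(n^2) - 2(q-1)n^2$, obtaining $G\bigl(q^{k+1} - (q-1)\sigma(q^k)\bigr)$. The bracketed quantity is evaluated using the closed form $\sigma(q^k) = (q^{k+1}-1)/(q-1)$, valid since $q$ is prime: this gives $(q-1)\sigma(q^k) = q^{k+1}-1$, hence $q^{k+1} - (q-1)\sigma(q^k) = 1$. Therefore $q\sigma(n^2) - 2(q-1)n^2 = G = \gcd(n^2,\sigma(n^2))$, which is the assertion.

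An essentially equivalent route, avoiding the auxiliary symbol $G$, is to clear denominators in the target identity $q\sigma(n^2) - 2(q-1)n^2 = \sigma(n^2)/q^k$: multiplying through by $q^k$ reduces it to $(q^{k+1}-1)\sigma(n^2) = 2(q-1)q^k n^2$, and the right-hand side equals $(q-1)\sigma(q^k)\sigma(n^2) = (q^{k+1}-1)\sigma(n^2)$ upon using $\sigma(q^k)\sigma(n^2) = \sigma(q^k n^2) = 2q^k n^2$ together with $(q-1)\sigma(q^k) = q^{k+1}-1$. Either way the computation is routine; there is no genuine obstacle beyond bookkeeping. The only points demanding care are to invoke the perfectness of $m$ (through $\sigma(m)=2m$) and Euler's structure theorem (through $\gcd(q,n)=1$, which makes $\sigma$ multiplicative across $q^k$ and $n^2$, and hence legitimizes the chain of equalities in Section 2) at the appropriate steps.
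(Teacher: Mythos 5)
Your proof is correct and rests on the same foundation as the paper's: the chain $\sigma(n^2)/q^k = 2n^2/\sigma(q^k) = \gcd(n^2,\sigma(n^2))$ together with $(q-1)\sigma(q^k)=q^{k+1}-1$. The only difference is presentational — the paper applies the proportion identity $\frac{A}{B}=\frac{C}{D}=\frac{A-C}{B-D}$ to the pair $\sigma(n^2)/q^k = D(n^2)/s(q^k)$, whereas you substitute $\sigma(n^2)=Gq^k$ and $2n^2=G\sigma(q^k)$ directly; both reduce to the same one-line computation.
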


\begin{proof}
From the Introduction, we have the equation
$$\frac{\sigma(n^2)}{q^k}=\frac{D(n^2)}{s(q^k)}=\gcd(n^2, \sigma(n^2)).$$
We use the identity
$$\frac{A}{B}=\frac{C}{D}=\frac{A - C}{B - D}$$
which will work if we set $A = \sigma(n^2)$, $B = q^k$, $C = (q - 1)D(n^2)$, and $D = q^k - 1$.  Note that $B - D \neq 0$.

We finally obtain
$$\gcd(n^2, \sigma(n^2)) = \sigma(n^2) - (q - 1)(2n^2 - \sigma(n^2)) = \sigma(n^2) - 2qn^2 + 2n^2 + q\sigma(n^2) - \sigma(n^2)$$ 
$$= q\sigma(n^2) - 2(q - 1)n^2.$$
\end{proof}

\begin{remark}\label{CrucialObservation}
In particular, we get that
$$I(n^2) - \frac{2(q - 1)}{q} = \frac{I(n^2)}{q^{k+1}} = \frac{\sigma(n^2)}{q^k}\cdot\frac{1}{qn^2}.$$
\end{remark}

From the crucial observation in Remark \ref{CrucialObservation}, we get the following improved lower bound for $I(n^2)$:

\begin{lemma}\label{ImprovedLowerBoundforI(n^2)}
If $q^k n^2$ is an odd perfect number with special prime $q$, then
$$I(n^2) > \frac{2(q - 1)}{q} + \frac{1}{qn^2}.$$
\end{lemma}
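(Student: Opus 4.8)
The plan is to obtain the inequality almost for free from Remark~\ref{CrucialObservation} and then to spend the real effort on turning the resulting ``$\ge$'' into the asserted ``$>$''.

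Remark~\ref{CrucialObservation} already records the exact identity
$$I(n^2) - \frac{2(q-1)}{q} = \frac{\sigma(n^2)}{q^k}\cdot\frac{1}{qn^2}.$$
Now $\sigma(n^2)/q^k$ is a positive integer: from $\sigma(q^k)\sigma(n^2) = \sigma(q^k n^2) = 2q^k n^2$ and $\gcd\bigl(q^k,\sigma(q^k)\bigr)=1$ one gets $q^k \mid \sigma(n^2)$, and in fact $\sigma(n^2)/q^k = \gcd(n^2,\sigma(n^2))$ by the chain of equalities in the Introduction. Hence $\sigma(n^2)/q^k \ge 1$, and substituting this into the identity yields the non-strict bound $I(n^2) \ge \frac{2(q-1)}{q} + \frac{1}{qn^2}$ at once. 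So the whole content of the Lemma is the claim that $\sigma(n^2)/q^k = \gcd(n^2,\sigma(n^2)) \ne 1$, equivalently (dividing $\sigma(q^k)\sigma(n^2)=2q^k n^2$ by $\sigma(n^2)=q^k$) that $\sigma(q^k) \ne 2n^2$.

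To rule out $\sigma(q^k) = 2n^2$ I would split on $k$. When $k=1$: then $\sigma(n^2)=q$ is prime, so $n^2$ must be a prime power $p^{2c}$ with $p$ odd and $c\ge 1$ (a number with two or more distinct prime factors has composite $\sigma$); writing $q = 2n^2-1 = 2p^{2c}-1$ and $\sigma(p^{2c}) = q$ collapses to $\frac{p^{2c}-1}{p-1} = p^{2c}-1$, forcing $p=2$, a contradiction. When $k\ge 5$ (so $k+1 = 2m$ with $m\ge 3$ odd): I would write $\sigma(q^k) = \frac{q^{2m}-1}{q-1} = \prod_{d\mid 2m,\,d>1}\Phi_d(q)$, strip off the single factor $2$ (which comes from $\Phi_2(q) = q+1 \equiv 2 \pmod 4$, all other cyclotomic factors being odd), and show that the resulting product, equal to $n^2$, has a unitary divisor built from the top cyclotomic factor $\Phi_m(q)$ (for instance $\Phi_3(q) = q^2+q+1$ when $m=3$) that is forced to be a perfect square; but that factor lies strictly between two consecutive squares for every $q\ge 5$ — e.g. $q^2 < q^2+q+1 < (q+1)^2$ — a contradiction.

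The $k\ge 5$ case is the genuine obstacle. The factors $\Phi_d(q)$ are not in general pairwise coprime, so before extracting a unitary divisor one must pin down their pairwise gcds — all of which are supported on a single small prime dividing $2m$, with the relevant $p$-adic valuations equal to $1$ — and one then has to verify that the candidate factor (possibly after dividing out that small prime) is never a perfect square for $q\ge 5$. For $m$ prime the latter is precisely the statement that $\frac{q^m-1}{q-1}$ is not a square, which follows from Ljunggren's theorem (the only solution with $m\ge 3$ being $q=3$); for composite $m$ one can either run the ``strictly between consecutive squares'' argument on $\Phi_m(q)$ or $\Phi_{2m}(q)$ directly, or appeal to results of Ljunggren--Nagell type on $\frac{q^{k+1}-1}{q-1} = 2y^2$. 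Alternatively, if $\gcd(n^2,\sigma(n^2))>1$ is already available from the earlier works \cite{D1,D3,D5}, this last step can simply quote it. Everything else in the proof is a one-line substitution.
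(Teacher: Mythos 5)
Your reduction is the right one, and in fact the paper's entire proof is the one-line fallback you only mention at the very end: Remark~\ref{CrucialObservation} gives $I(n^2) - \frac{2(q-1)}{q} = \frac{\sigma(n^2)}{q^k}\cdot\frac{1}{qn^2}$, and the strict bound $\sigma(n^2)/q^k > 1$ is simply quoted from \cite{D4} (the paper later records that $\sigma(n^2)/q^k \geq 3375$ is currently known). So if you invoke the prior literature for $\gcd(n^2,\sigma(n^2)) \neq 1$, your argument collapses to the paper's.

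The problem is with your primary, self-contained route. The $k=1$ case is fine in substance (though the displayed identity $\frac{p^{2c}-1}{p-1}=p^{2c}-1$ is garbled; the correct collapse of $\frac{p^{2c+1}-1}{p-1}=2p^{2c}-1$ is $(p-2)(p^{2c}-1)=0$, and the conclusion $p=2$ stands). The $k\geq 5$ case, however, is not a proof: the ``strictly between consecutive squares'' device only disposes of $\Phi_3(q)=q^2+q+1$, and for general odd $m=(k+1)/2$ you are left needing that some cyclotomic factor of $\frac{q^{k+1}-1}{q-1}$ (after removing a controlled small-prime gcd) is never a perfect square for $q\geq 5$. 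That is exactly a Ljunggren--Nagell-type Diophantine problem, which you acknowledge but do not carry out; moreover the intermediate claims --- that the pairwise gcds of the $\Phi_d(q)$ are supported on single small primes with valuation one, and that the top cyclotomic factor therefore survives as a unitary divisor of $n^2$ and is forced to be a square --- are asserted rather than verified. As written, the lemma is established only modulo either these unproved facts or the citation of \cite{D4}; the latter is what the paper actually does, and is the clean way to close the gap.
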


\begin{proof}
The proof follows by considering the lower bound $\sigma(n^2)/q^k > 1$ from \cite{D4}.
\end{proof}

We shall use Lemma \ref{ImprovedLowerBoundforI(n^2)} to get an upper bound for $k$ in the succeeding sections.

We can generalize Lemma \ref{ImprovedLowerBoundforI(n^2)} to the following corollary, if we consider a generic (and strict) lower bound $\rho$ for $\sigma(n^2)/q^k$:

\begin{corollary}\label{ImprovedLowerBoundforI(n^2)RHO}
If $q^k n^2$ is an odd perfect number with special prime $q$, then
$$I(n^2) > \frac{2(q - 1)}{q} + \frac{\rho}{qn^2}.$$
\end{corollary}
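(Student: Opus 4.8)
The plan is to imitate the proof of Lemma \ref{ImprovedLowerBoundforI(n^2)} essentially verbatim, simply replacing the concrete bound $\sigma(n^2)/q^k > 1$ from \cite{D4} by the generic strict lower bound $\sigma(n^2)/q^k > \rho$ that is assumed. So the corollary is not really a new result: it is a bookkeeping statement recording that \emph{any} improved lower bound for $\sigma(n^2)/q^k$ propagates, unchanged in form, to an improved lower bound for $I(n^2)$.

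First I would recall the identity established in Remark \ref{CrucialObservation}, namely
$$I(n^2) - \frac{2(q - 1)}{q} = \frac{\sigma(n^2)}{q^k}\cdot\frac{1}{qn^2}.$$
Since $q$ is a prime and $n$ is a positive integer, the factor $1/(qn^2)$ is a fixed positive quantity. Hence multiplying the hypothesised inequality $\sigma(n^2)/q^k > \rho$ through by $1/(qn^2)$ preserves its direction, and combining with the displayed identity gives
$$I(n^2) - \frac{2(q - 1)}{q} = \frac{\sigma(n^2)}{q^k}\cdot\frac{1}{qn^2} > \frac{\rho}{qn^2}.$$
Transposing the term $2(q-1)/q$ to the right-hand side then yields exactly the asserted inequality
$$I(n^2) > \frac{2(q - 1)}{q} + \frac{\rho}{qn^2}.$$

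There is no genuine obstacle here; the only points that need care are (i) that $\rho$ is a \emph{strict} lower bound, so that the inequality does not degenerate, and (ii) that the positivity of $1/(qn^2)$ (not of $\rho$ itself) is what licenses the multiplication step — the argument goes through regardless of the sign of $\rho$, though the corollary is of course only useful when $\rho \geq 1$, in which case it recovers Lemma \ref{ImprovedLowerBoundforI(n^2)} as the special case $\rho = 1$. I would close by remarking that this reduction is precisely what makes the corollary the right tool for the next section: improving the bound on $k$ amounts to locating a sufficiently large admissible $\rho$.
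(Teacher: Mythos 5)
Your proof is correct and follows exactly the route the paper intends: the paper states this corollary as an immediate generalization of Lemma \ref{ImprovedLowerBoundforI(n^2)}, obtained by substituting the generic strict lower bound $\rho$ for the concrete bound $1$ in the identity of Remark \ref{CrucialObservation}. Your write-up simply makes explicit the one-line argument the paper leaves implicit.
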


Note that
$$I(q^k)+I(n^2)=\frac{3q^{2k+2} - 4q^{2k+1} + 2q^{2k} - 2q^{k+1} + 1}{q^k (q - 1)(q^{k+1} - 1)}$$
and that WolframAlpha gives the partial fraction decomposition
$$I(q^k)+I(n^2)=\frac{3q^2-4q+2}{q(q-1)}+\frac{2(q-1)}{q(q^{k+1}-1)}-\frac{1}{q^k(q-1)}.$$

\subsection{Improving the upper bound $I(q^k)+I(n^2) \leq 3 - \bigg(\frac{q-1}{q(q+1)}\bigg)$}\label{UpperBound}

By Theorem \ref{MainResult1}, let 
$$f(k) = 3 - (I(q^k) + I(n^2)) = \frac{(q^k-1)(q^{k+1}-2q^k+1)}{{q^k}(q-1)(q^{k+1}-1)}.$$

Letting
$$g(k) = I(q^k) + I(n^2),$$
it follows that
$$g'(k) = -f'(k).$$
But we know from the results of the paper \cite{D1} that
$$f'(k) = \bigg(\frac{ (q-4)q^{2 k + 1} +2 q^{k + 1}  + 2 q^{2 k} - 1}{q^k(q - 1) (q^{k + 1} - 1)^2}\bigg)\ln q,$$
which is positive for $k \geq 1$ and $q \geq 5$.
Thus, $g'(k) < 0$ for all $k$.

It follows that
$$\lim_{k \rightarrow \infty}{g(k)} < g(k) \leq g(1).$$

But
$$\lim_{k \rightarrow \infty}{g(k)} = 3 - \bigg(\frac{q-2}{q(q-1)}\bigg)$$
and
$$g(1) = 3 - \bigg(\frac{q-1}{q(q+1)}\bigg),$$
which are just the known bounds.

We can now prove the following theorem.

\begin{theorem}\label{MainResult2}
Let $m = q^k n^2$ be an odd perfect number with special prime $q$.
\begin{enumerate}
\item{If $k \neq 1$ can be proved, then one can obtain an improved upper bound for $I(q^k) + I(n^2)$.}
\item{If one can obtain an improved upper bound for $I(q^k) + I(n^2)$, then $k \neq 1$ can be proved.}
\end{enumerate}
\end{theorem}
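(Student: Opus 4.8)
The plan is to exploit the monotonicity of $g(k) = I(q^k) + I(n^2)$ in $k$, which was already established in the discussion preceding the theorem, namely that $g'(k) < 0$ for all $k \geq 1$ and $q \geq 5$. The two implications are essentially contrapositives of a single observation: the \emph{only} way for $g(k)$ to attain its supremum $g(1) = 3 - \frac{q-1}{q(q+1)}$ is to have $k = 1$, because $g$ is strictly decreasing.

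For part (1), I would argue as follows. Suppose $k \neq 1$ has been proved. Since $q \equiv k \equiv 1 \pmod 4$, the smallest admissible value of $k$ other than $1$ is $k = 5$. By strict monotonicity, $g(k) \leq g(5) < g(1)$, so $g(5) = 3 - \frac{(q^5-1)(q^6 - 2q^5 + 1)}{q^5(q-1)(q^6-1)}$ (obtained by substituting $k=5$ into Theorem \ref{MainResult1}) is a genuine improvement over the known upper bound $g(1) = 3 - \frac{q-1}{q(q+1)}$. More carefully, one should check that $g(5)$ is strictly less than $g(1)$ as a function of $q$ for all $q \geq 5$; this follows immediately from $g'(k) < 0$ on $[1,5]$, but one could also verify the inequality $\frac{(q^5-1)(q^6-2q^5+1)}{q^5(q-1)(q^6-1)} > \frac{q-1}{q(q+1)}$ directly. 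So the new upper bound is $I(q^k) + I(n^2) \leq g(5)$.

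For part (2), I would prove the contrapositive: if $k = 1$, then no improvement on the upper bound is possible. Indeed, if $k = 1$ then $g(k) = g(1) = 3 - \frac{q-1}{q(q+1)}$ exactly, so the known upper bound is attained and therefore cannot be improved (for that value of $k$). Equivalently, in the forward direction: suppose one has an improved upper bound, i.e., a bound $B(q) < g(1)$ with $I(q^k) + I(n^2) \leq B(q)$ holding for all odd perfect numbers $q^k n^2$. Then $g(k) \leq B(q) < g(1)$, and since $g$ is strictly decreasing with $g(1)$ its unique maximum over integers $k \geq 1$, we must have $k \neq 1$. This uses nothing beyond the already-recorded facts $g'(k) < 0$ and $g(1) = 3 - \frac{q-1}{q(q+1)}$.

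The main obstacle is mostly one of bookkeeping and precise formulation rather than a genuine mathematical difficulty: one must be careful about what ``improved upper bound'' means — whether it is a bound uniform in $q$, or a bound for each fixed $q$ — and about the fact that $k$ ranges over $\{1, 5, 9, 13, \ldots\}$ rather than all positive integers, so that ``$k \neq 1$'' is equivalent to ``$k \geq 5$''. Once these are pinned down, both directions reduce to the single fact that $g$ is strictly decreasing on $[1,\infty)$, which is already in hand from \cite{D1} and restated in the excerpt. I would also want to double-check that the derivative formula for $f'(k)$ indeed yields $g'(k) < 0$ with no sign subtleties when $q = 5$, since that is the extremal case, but this is a routine verification.
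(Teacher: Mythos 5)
Your proposal is correct and follows essentially the same route as the paper: part (1) uses $k\equiv 1 \pmod 4$ to get $k\geq 5$, hence $g(k)\leq g(5)<g(1)$ by the strict monotonicity of $g$, and part (2) derives the contradiction $g(1)\leq B(q)<g(1)$ from assuming $k=1$. Your extra remarks about checking $g(5)<g(1)$ directly and about the meaning of ``improved upper bound'' are sensible but do not change the argument.
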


\begin{proof}
For the first implication, we just need to use the fact that $g(k)=I(q^k)+I(n^2)$ is decreasing, so that if $k \neq 1$, then $k \geq 5$, which implies that $g(k) \leq g(5)$, where of course we have $g(5) < g(1)$ (since $g(k)$ is decreasing).

For the second implication, assume that an improved upper bound of $h_1(q)<g(1)$ is obtained for $I(q^k)+I(n^2)$.  Assume to the contrary that $k=1$.  Then
$$g(1)=I(q^k)+I(n^2) \leq h_1(q) < g(1),$$
which is a contradiction.  It follows that $k \neq 1$.

This finishes the proof. 
\end{proof}

\subsection{Improving the lower bound $3 - \bigg(\frac{q-2}{q(q-1)}\bigg) < I(q^k)+I(n^2)$}\label{LowerBound}

First, note that trivially, we have the upper bound $k < m = q^k n^2$.  Similar to the approach in Subsection \ref{UpperBound}, we can prove the following theorem.

\begin{theorem}\label{MainResult3}
Let $m = q^k n^2$ be an odd perfect number with special prime $q$.
\begin{enumerate}
\item{If it can be proven that there is an integer $e$ such that $k \leq e$, then one can obtain an improved lower bound for $I(q^k) + I(n^2)$.}
\item{If one can obtain an improved lower bound for $I(q^k) + I(n^2)$, then it can be proven that there is an integer $e$ such that $k \leq e$.}
\end{enumerate}
\end{theorem}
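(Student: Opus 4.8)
The plan is to mirror the argument used in Subsection~\ref{UpperBound} for Theorem~\ref{MainResult2}, exploiting the monotonicity of $g(k) = I(q^k) + I(n^2)$ established there (namely $g'(k) < 0$ for all $k \geq 1$, $q \geq 5$, which gives $\lim_{k\to\infty} g(k) < g(k) \leq g(1)$). For the first implication, I would assume a known integer upper bound $k \leq e$. Since $g$ is strictly decreasing in $k$, evaluating at the largest admissible exponent gives $g(k) > g(e+1)$ — or, being slightly more careful about whether $e$ itself is attainable, $g(k) \geq g(e)$; in either case we produce a bound of the form $I(q^k) + I(n^2) > \ell_e(q)$ where $\ell_e(q) := g(e+1) = 3 - f(e+1)$ is strictly larger than the old limiting lower bound $3 - \frac{q-2}{q(q-1)} = \lim_{k\to\infty} g(k)$, precisely because $g$ has not yet reached its infimum. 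Concretely one can read off $\ell_e(q)$ from the partial fraction decomposition
$$g(k) = \frac{3q^2 - 4q + 2}{q(q-1)} + \frac{2(q-1)}{q(q^{k+1}-1)} - \frac{1}{q^k(q-1)}$$
displayed earlier: the $k$-dependent tail $\frac{2(q-1)}{q(q^{k+1}-1)} - \frac{1}{q^k(q-1)}$ is positive and decreasing to $0$, so truncating it at $k = e+1$ leaves a strictly positive residual, which is exactly the improvement.

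For the second implication, I would argue by contrapositive in the same style as the proof of Theorem~\ref{MainResult2}(2). Suppose an improved lower bound $h_2(q) > 3 - \frac{q-2}{q(q-1)}$ has been obtained, so that $I(q^k) + I(n^2) = g(k) > h_2(q)$ for every odd perfect number. Since $g$ is strictly decreasing and $g(k) \to 3 - \frac{q-2}{q(q-1)}$ as $k \to \infty$, the strict inequality $g(k) > h_2(q)$ with $h_2(q)$ bounded away from the limit forces $k$ to be bounded: explicitly, solving $g(k) > h_2(q)$ for $k$ using the tail estimate above yields an explicit threshold $e = e(q)$ (roughly of size $\log_q\!\big(1/(h_2(q) - (3 - \frac{q-2}{q(q-1)}))\big)$) beyond which the inequality would fail, so necessarily $k \leq e$. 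Combined with the elementary observation that $q$ itself is bounded below (indeed $q \geq 5$) this even gives a genuine numerical bound; but for the stated equivalence it suffices to exhibit such an integer $e$ depending on $q$.

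The main obstacle is not logical but quantitative: unlike the upper-bound case, where $k \neq 1$ immediately jumps $k$ to $k \geq 5$ and hence to the clean evaluation $g(5) < g(1)$, here there is no a priori small gap — a hypothetical bound $k \leq e$ with $e$ enormous produces an improvement $\ell_e(q)$ that is exponentially (in $e$) close to the old bound, so the ``improvement'' may be numerically negligible even though it is formally an improvement. Care is also needed in the second implication to ensure the derived threshold $e(q)$ is a bona fide integer and is stated uniformly enough to count as ``there is an integer $e$ such that $k \leq e$''; one should either fix $q$ first or absorb the dependence on $q$ by noting $q \geq 5$ and that $h_2$ is an explicit function of $q$. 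Modulo these bookkeeping points, both directions are routine consequences of the strict monotonicity of $g$ and the explicit form of $f(k) = 3 - g(k)$ from Theorem~\ref{MainResult1}.
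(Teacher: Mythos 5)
Your proposal is correct and follows essentially the same route as the paper: both directions rest on the strict monotonicity of $g(k) = I(q^k) + I(n^2)$ established in Subsection~\ref{UpperBound}, with the first implication reading off the improved bound from $g(e)$ and the second inverting $g$ (the paper phrases this as injectivity of $g$ yielding a unique $k_0$ with $g(k_0) = h_2(q)$ and hence $k \leq \lceil k_0 \rceil$). Your added care about the improvement being strictly positive via the decaying tail of the partial fraction decomposition is a welcome but inessential refinement of the same argument.
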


\begin{proof}
Suppose that $m = q^k n^2$ is a hypothetical odd perfect number with special prime $q$.

Assume that there is an integer $e$ such that $k \leq e$.  As before, let
$$g(k) = I(q^k) + I(n^2).$$
By the considerations in Subsection \ref{UpperBound}, $g'(k) < 0$ for all $k$, so that the function $g(k)$ is decreasing.  This means that, considering the graph of $Y = g(k)$, we see that $g(e)$ is an improved lower bound since $k \leq e$ implies $g(e) \leq g(k)$.

For the second implication, if one gets an improved lower bound $h_2(q)$, then from the proof of the first implication, since $g(k)$ is (strictly) decreasing, $g(k)$ is one-to-one.  Therefore, there is only one $k = k_0$ such that $g(k) = h(q)$.  Then we have
$$k \leq \lceil{k_0}\rceil.$$

This concludes the proof.
\end{proof}

Now, let us see what we have got from Lemma \ref{ImprovedLowerBoundforI(n^2)}:

\begin{theorem}\label{ImprovedLowerBoundforI(q^k)+I(n^2)}
If $q^k n^2$ is an odd perfect number with special prime $q$, then
$$I(q^k) + I(n^2) > \dfrac{3q^2 - 4q + 2}{q(q - 1)} - \dfrac{q}{(q - 1)(2qn^2 - 2n^2 + 1)} + \dfrac{1}{qn^2} = l_1(q,n).$$
In particular, this means that there exists a number $K$ such that $k < K$ and \linebreak $g(K) = l_1(q,n)$.
\end{theorem}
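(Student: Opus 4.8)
The plan is to reduce everything to the improved lower bound for $I(n^2)$ already furnished by Lemma \ref{ImprovedLowerBoundforI(n^2)}, combined with the elementary monotonicity of the map $\phi(t) = t + 2/t$. Since $I(q^k)I(n^2) = 2$, one has
$$I(q^k) + I(n^2) = I(n^2) + \frac{2}{I(n^2)} = \phi(I(n^2)), \qquad \phi(t) := t + \frac{2}{t}.$$
Because $\phi'(t) = 1 - 2/t^2 > 0$ for $t > \sqrt{2}$, the function $\phi$ is strictly increasing on $(\sqrt{2},\infty)$. First I would note that $I(n^2) > \frac{2(q-1)}{q} \geq \frac{8}{5} > \sqrt{2}$ for every admissible $q$ (recall $q \geq 5$), and that the same bound holds for the quantity $t_0 := \frac{2(q-1)}{q} + \frac{1}{qn^2}$ appearing in Lemma \ref{ImprovedLowerBoundforI(n^2)}. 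Since $I(n^2) > t_0$, monotonicity of $\phi$ on $[t_0, I(n^2)] \subset (\sqrt{2},\infty)$ gives $I(q^k) + I(n^2) = \phi(I(n^2)) > \phi(t_0)$.

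The second step is the purely algebraic identity $\phi(t_0) = l_1(q,n)$. Writing $t_0 = \frac{2qn^2 - 2n^2 + 1}{qn^2}$, one computes
$$\frac{2}{t_0} = \frac{2qn^2}{2qn^2 - 2n^2 + 1} = \frac{q}{q-1}\left(1 - \frac{1}{2qn^2 - 2n^2 + 1}\right) = \frac{q}{q-1} - \frac{q}{(q-1)(2qn^2 - 2n^2 + 1)},$$
where the middle step uses $\frac{2qn^2}{2qn^2 - 2n^2 + 1} = \frac{q}{q-1}\cdot\frac{2n^2(q-1)}{2qn^2 - 2n^2 + 1}$ together with $2n^2(q-1) = (2qn^2 - 2n^2 + 1) - 1$. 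Adding $t_0 = \frac{2(q-1)}{q} + \frac{1}{qn^2}$ and using $\frac{2(q-1)}{q} + \frac{q}{q-1} = \frac{3q^2 - 4q + 2}{q(q-1)}$ yields
$$\phi(t_0) = \frac{3q^2 - 4q + 2}{q(q-1)} - \frac{q}{(q-1)(2qn^2 - 2n^2 + 1)} + \frac{1}{qn^2} = l_1(q,n),$$
which, together with the previous paragraph, establishes the displayed inequality $I(q^k) + I(n^2) > l_1(q,n)$. The only delicate point here is arranging the algebra so that $\phi(t_0)$ collapses onto $l_1(q,n)$ exactly, rather than merely up to an inequality.

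For the final assertion I would invoke the facts recorded in Subsection \ref{UpperBound}: as a function of a real variable $k$, $g(k) = I(q^k) + I(n^2)$ is continuous and strictly decreasing, with $\lim_{k\to\infty} g(k) = 3 - \frac{q-2}{q(q-1)} = \frac{3q^2 - 4q + 2}{q(q-1)}$. The difference $l_1(q,n) - \lim_{k\to\infty}g(k) = \frac{1}{qn^2} - \frac{q}{(q-1)(2qn^2 - 2n^2 + 1)}$ is positive exactly when $(q-1)(2qn^2 - 2n^2 + 1) > q^2 n^2$, i.e.\ when $n^2(q^2 - 4q + 2) > -(q-1)$, and this holds for all $q \geq 5$ since then $q^2 - 4q + 2 \geq 7 > 0$. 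Hence $\lim_{k\to\infty} g(k) < l_1(q,n) < g(k)$, so the intermediate value theorem applied to $g$ on $[k,\infty)$ produces a real $K > k$ with $g(K) = l_1(q,n)$ (unique by strict monotonicity), giving $k < K$ as required. I expect the main obstacle to be precisely this last numeric comparison, together with the exact simplification in the second paragraph; everything else follows immediately from Lemma \ref{ImprovedLowerBoundforI(n^2)} and the monotonicity already in hand.
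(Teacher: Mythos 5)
Your proposal is correct and follows essentially the same route as the paper: the engine is Lemma \ref{ImprovedLowerBoundforI(n^2)}, and your monotonicity argument for $\phi(t)=t+2/t$ on $(\sqrt{2},\infty)$ is just an equivalent repackaging of the paper's observation that $\bigl(I(q^k)-t_0\bigr)\bigl(I(n^2)-t_0\bigr)<0$ forces $I(q^k)+I(n^2)>t_0+2/t_0$, while your hand computation of $\phi(t_0)=l_1(q,n)$ supplies the partial-fraction identity the paper delegates to WolframAlpha. The one small divergence is at the end: you obtain $K$ directly by the intermediate value theorem on $[k,\infty)$ using only $\lim_{k\to\infty}g(k)<l_1(q,n)<g(k)$, which is marginally leaner than the paper's route through Theorem \ref{MainResult3} (the paper additionally verifies $l_1(q,n)<g(1)$, which needs the input $q+1\leq\sigma(q^k)\leq 2n^2/3$ from Dris 2012); both are valid.
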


\begin{proof}
From Lemma \ref{ImprovedLowerBoundforI(n^2)}, we have
$$I(n^2) > \frac{2(q - 1)}{q}+\frac{1}{qn^2}.$$
But we know that
$$I(q^k) < \frac{q}{q - 1} < \frac{2(q - 1)}{q}.$$
Hence, we obtain
$$I(q^k) < \frac{2(q - 1)}{q}+\frac{1}{qn^2} < I(n^2).$$
Consider the product
$$\Bigg(I(q^k) - \bigg(\frac{2(q - 1)}{q}+\frac{1}{qn^2}\bigg)\Bigg)\Bigg(I(n^2) - \bigg(\frac{2(q - 1)}{q}+\frac{1}{qn^2}\bigg)\Bigg).$$
This product is negative.  Consequently, we get
$$2+\bigg(\frac{2(q - 1)}{q}+\frac{1}{qn^2}\bigg)^2 < \bigg(\frac{2(q - 1)}{q}+\frac{1}{qn^2}\bigg)\bigg(I(q^k) + I(n^2)\bigg),$$
from which we finally obtain
$$I(q^k) + I(n^2) > \frac{2}{\bigg(\frac{2(q - 1)}{q}+\frac{1}{qn^2}\bigg)} + \bigg(\frac{2(q - 1)}{q}+\frac{1}{qn^2}\bigg) = l_1(q,n).$$
WolframAlpha computes the partial fraction decomposition of $l_1(q,n)$ as
$$l_1(q,n) = \frac{3q^2 - 4q + 2}{q(q - 1)} - \frac{q}{(q - 1)(2qn^2 - 2n^2 + 1)} + \frac{1}{qn^2}.$$
Note that 
$$-\dfrac{q}{(q - 1)(2qn^2 - 2n^2 + 1)} + \dfrac{1}{qn^2} = \dfrac{qn^2 (q - 4) + q + 2n^2 - 1}{qn^2 (q - 1)(2n^2 (q - 1) + 1)} > 0$$
holds, since $q$ is a prime satisfying $q \equiv 1 \pmod 4$ implies that $q \geq 5$.  Hence,
$$l_1(q,n) = \frac{3q^2 - 4q + 2}{q(q - 1)} - \frac{q}{(q - 1)(2qn^2 - 2n^2 + 1)} + \frac{1}{qn^2}$$
is an improved lower bound for $I(q^k) + I(n^2)$, better than
$$I(q^k) + I(n^2) > \frac{3q^2 - 4q + 2}{q(q - 1)}.$$
Note that $l_1(q,n)$ does not contain $k$.

Lastly, we need to check that, indeed
$$g(1) - l_1(q,n) = \bigg(\frac{3q^2 + 2q + 1}{q(q + 1)}\bigg) - \bigg(\frac{3q^2 - 4q + 2}{q(q - 1)} - \frac{q}{(q - 1)(2qn^2 - 2n^2 + 1)} + \dfrac{1}{qn^2}\bigg)$$
is positive.  We compute
$$g(1) - l_1(q,n) = \frac{\bigg((q-3)n^2 + 1\bigg)\bigg(2n^2 - q - 1\bigg)}{n^2 q(q + 1) \bigg(2n^2 (q - 1) + 1\bigg)}$$
which is indeed positive since 
$$5 \leq q < 2n^2 - 1$$
follows from $q$ being a prime satisfying $q \equiv 1 \pmod 4$, and the result
$$q + 1 \leq \sigma(q^k) \leq \frac{2n^2}{3} < 2n^2$$
from Dris \cite{D4}. 

By Theorem \ref{MainResult3}, we infer that there exists a number $K$ such that $1 \leq k < K$ and $g(K) = l_1(q,n)$.
\end{proof}

We now compute an explicit upper bound for $K$, in terms of $q$ and $n$.

\begin{theorem}\label{ExplicitUpperBoundForKInTermsofqAndn}
If $g(K) = l_1(q,n)$, then 
$$\frac{q^{K+1} - 1}{q - 1} = 2n^2.$$
In particular, 
$$K < \log_q{2} + 2\log_q{n}.$$
\end{theorem}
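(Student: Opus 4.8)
The plan is to recognise that, once both sides are unwound, the hypothesis $g(K) = l_1(q,n)$ merely equates two numbers of the same shape $x + 2/x$. Adopting the convention already in force in Subsection~\ref{UpperBound} --- namely, reading $g$ as a function of a real variable via $g(k) = I(q^k) + 2/I(q^k)$ with $I(q^k) := \frac{q^{k+1}-1}{(q-1)q^k}$, which is consistent with Theorem~\ref{MainResult1} because $3 - g(k) = (I(q^k)-1)\big(2/I(q^k)-1\big)$ --- I note that the proof of Theorem~\ref{ImprovedLowerBoundforI(q^k)+I(n^2)} exhibits $l_1(q,n)$ in precisely this form: $l_1(q,n) = t + 2/t$, where $t := \frac{2(q-1)}{q} + \frac{1}{qn^2}$. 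Hence $g(K) = l_1(q,n)$ reads
$$I(q^K) + \frac{2}{I(q^K)} = t + \frac{2}{t},$$
and since $a + 2/a = b + 2/b$ is equivalent to $(a-b)(ab - 2) = 0$, either $I(q^K) = t$ or $I(q^K)\,t = 2$.

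The next step is to discard the first alternative. One always has $I(q^K) = \frac{q - q^{-K}}{q-1} < \frac{q}{q-1}$, whereas $t > \frac{2(q-1)}{q} > \frac{q}{q-1}$, the last inequality being equivalent to $q^2 - 4q + 2 > 0$, which holds since $q \geq 5$. Therefore $I(q^K) < t$, and we are left with
$$I(q^K) = \frac{2}{t} = \frac{2qn^2}{2(q-1)n^2 + 1}.$$
Writing $\sigma(q^K)$ as shorthand for $\frac{q^{K+1}-1}{q-1}$, one has $q\cdot q^K = (q-1)\sigma(q^K) + 1$, hence $I(q^K) = \frac{q\,\sigma(q^K)}{(q-1)\sigma(q^K) + 1}$; setting this equal to $\frac{2qn^2}{2(q-1)n^2+1}$ and cross-multiplying, the common terms $2(q-1)n^2\,\sigma(q^K)$ cancel, leaving $\sigma(q^K) = 2n^2$, that is, $\frac{q^{K+1}-1}{q-1} = 2n^2$. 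For the explicit estimate, rewrite this as $q^{K+1} = 2(q-1)n^2 + 1 = 2qn^2 - (2n^2 - 1) < 2qn^2$; dividing by $q$ gives $q^K < 2n^2$, so $K < \log_q(2n^2) = \log_q 2 + 2\log_q n$.

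I do not anticipate a genuine obstacle; the one delicate point is the two-valuedness of $x \mapsto x + 2/x$, which forces one to rule out the spurious root $I(q^K) = t$ --- and the elementary chain $I(q^K) < q/(q-1) < t$ does this at once. As a cross-check (and an alternative proof) one may instead substitute the value $K_0$ determined by $\frac{q^{K_0+1}-1}{q-1} = 2n^2$ directly into the partial fraction expression $g(k) = \frac{3q^2-4q+2}{q(q-1)} + \frac{2(q-1)}{q(q^{k+1}-1)} - \frac{1}{q^k(q-1)}$ recorded above, check term by term that it reproduces the partial fraction form of $l_1(q,n)$ from Theorem~\ref{ImprovedLowerBoundforI(q^k)+I(n^2)}, and then invoke the strict monotonicity of $g$ established in Subsection~\ref{UpperBound} to conclude $K = K_0$.
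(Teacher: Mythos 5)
Your proof is correct, and it reaches the paper's two key conclusions by a cleaner route. The paper starts from the partial fraction decompositions of $g(K)$ and $l_1(q,n)$, rearranges until both sides of the resulting equation carry the common numerator $2qn^2 - 2n^2 + 1 - q^{K+1}$, and argues by contradiction on whether that numerator vanishes; in the non-vanishing case it cancels to obtain $\frac{q^{K+1}-1}{q^K(q-1)} = \frac{2qn^2-2n^2+1}{qn^2}$, which is precisely your spurious root $I(q^K) = t$, and refutes it via $\sqrt{2} < \frac{q}{q-1}$ --- the same fact as your $2(q-1)^2 > q^2$, stated contrapositively. Your factorization $(a-b)(ab-2)=0$ makes this hidden dichotomy explicit from the outset and replaces the partial-fraction bookkeeping with the identity $q\cdot q^K = (q-1)\sigma(q^K)+1$ and a single cross-multiplication in which the quadratic terms cancel. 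Your closing estimate is also more economical: from $q^{K+1} = 2(q-1)n^2+1 < 2qn^2$ you get $q^K < 2n^2$ directly, whereas the paper proves $q^K < \frac{q^{K+1}-1}{q-1}$ by a separate contradiction argument that additionally invokes $1 \le k < K$. Both arguments are sound and rest on the same two inputs (the dichotomy and $q \ge 5$); the paper's layout has the incidental advantage of transplanting verbatim to the $\rho$-generalization in Corollary \ref{ImprovedLowerBoundforI(q^k)+I(n^2)RHO}, while yours isolates the structural reason the theorem holds (the map $x \mapsto x + 2/x$ is two-to-one and the bounds on $I(q^K)$ select the branch $I(q^K)\,t = 2$, i.e.\ formally $\sigma(q^K) = 2n^2$) and would generalize just as readily with $t$ replaced by $\frac{2(q-1)}{q} + \frac{\rho}{qn^2}$.
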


\begin{proof}
We require $g(K) = l_1(q,n)$.

But we have the partial fraction decompositions
$$g(K) = \frac{3q^2 - 4q + 2}{q(q - 1)} + \frac{2(q - 1)}{q(q^{K+1} - 1)} - \frac{1}{{q^K}(q - 1)}$$
and
$$l_1(q,n) = \frac{3q^2 - 4q + 2}{q(q - 1)} - \frac{q}{(q - 1)(2qn^2 - 2n^2 + 1)} + \frac{1}{qn^2},$$
as computed by WolframAlpha.

Rearranging terms from the equation $g(K) = l_1(q,n)$ then gives
$$\frac{2(q - 1)}{q(q^{K+1} - 1)} - \frac{1}{qn^2} = \frac{1}{{q^K}(q - 1)} - \frac{q}{(q - 1)(2qn^2 - 2n^2 + 1)}.$$

After some algebraic simplifications, we obtain
$$\frac{2n^2 (q - 1) - (q^{K+1} - 1)}{qn^2 (q^{K+1} - 1)} = \frac{2qn^2 - 2n^2 + 1 - q^{K+1}}{{q^K} (q - 1)(2qn^2 - 2n^2 + 1)},$$
from which we finally get
$$\frac{2qn^2 - 2n^2 + 1 - q^{K+1}}{qn^2 (q^{K+1} - 1)} = \frac{2qn^2 - 2n^2 + 1 - q^{K+1}}{{q^K} (q - 1)(2qn^2 - 2n^2 + 1)}.$$

Suppose to the contrary that
$$2qn^2 - 2n^2 + 1 - q^{K+1} \neq 0.$$
Then we may cancel $2qn^2 - 2n^2 + 1 - q^{K+1}$ in the numerator of both sides of the equation, to get
$${q^K} (q - 1)(2qn^2 - 2n^2 + 1) = qn^2 (q^{K+1} - 1).$$
This may be rewritten as
$$2 - \frac{2}{q} + \frac{1}{qn^2} = \frac{2qn^2 - 2n^2 + 1}{qn^2} = \frac{q^{K+1} - 1}{q^K (q - 1)}.$$
But we know of the estimates
$$\frac{2(q - 1)}{q} < 2 - \frac{2}{q} + \frac{1}{qn^2} = \frac{q^{K+1} - 1}{q^K (q - 1)} < \frac{q}{q - 1}.$$
These estimates imply that
$$\sqrt{2} < \frac{q}{q - 1},$$
contradicting
$$\frac{q}{q - 1} \leq \frac{5}{4},$$
since $q$ is the special prime satisfying $q \equiv 1 \pmod 4$ implies that $q \geq 5$.

The contradiction thus obtained means that our assumption that
$$2qn^2 - 2n^2 + 1 - q^{K+1} \neq 0$$
is untenable.  This implies that
$$2qn^2 - 2n^2 + 1 - q^{K+1} = 0,$$
from which we obtain
$$2n^2 (q - 1) = q^{K+1} - 1$$
$$2n^2 = \frac{q^{K+1} - 1}{q - 1}.$$

Note that $K$ may not be an integer.  We wish to show that
$$q^K < \frac{q^{K+1} - 1}{q - 1}.$$
Suppose to the contrary that
$$\frac{q^{K+1} - 1}{q - 1} \leq q^K.$$
We then get
$$q^{K+1} - 1 \leq q^{K+1} - q^K$$
$$q^K \leq 1$$
which contradicts
$$1 \leq k < K$$
and $q \geq 5$.

Hence, we obtain
$$q^K < 2n^2$$
$$K \log{q} < \log{2} + 2\log{n}$$
Finally, we get the upper bound
$$K < \log_q{2} + 2\log_q{n}.$$
\end{proof}

We can generalize Theorem \ref{ImprovedLowerBoundforI(q^k)+I(n^2)} and Theorem \ref{ExplicitUpperBoundForKInTermsofqAndn} in the following corollary, by using Corollary \ref{ImprovedLowerBoundforI(n^2)RHO}:

\begin{corollary}\label{ImprovedLowerBoundforI(q^k)+I(n^2)RHO}
If $q^k n^2$ is an odd perfect number with special prime $q$, then
$$I(q^k) + I(n^2) > \dfrac{3q^2 - 4q + 2}{q(q - 1)} - \dfrac{\rho q}{(q - 1)(2qn^2 - 2n^2 + \rho)} + \dfrac{\rho}{qn^2} = l_{\rho}(q,n).$$
In particular, this means that there exists a number $K'$ such that $k < K'$ and \linebreak $g(K') = l_{\rho}(q,n)$.  We thereby compute that
$$K' < \log_q {2} + 2\log_q {n} - \log_q {\rho}.$$
\end{corollary}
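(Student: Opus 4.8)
The plan is to mirror exactly the argument already carried out in Theorem \ref{ImprovedLowerBoundforI(q^k)+I(n^2)} and Theorem \ref{ExplicitUpperBoundForKInTermsofqAndn}, but carrying the parameter $\rho$ through every step in place of the constant $1$. First I would invoke Corollary \ref{ImprovedLowerBoundforI(n^2)RHO}, which gives $I(n^2) > \frac{2(q-1)}{q} + \frac{\rho}{qn^2}$, and observe that the quantity $\frac{2(q-1)}{q} + \frac{\rho}{qn^2}$ still separates $I(q^k)$ from $I(n^2)$, i.e. $I(q^k) < \frac{q}{q-1} < \frac{2(q-1)}{q} < \frac{2(q-1)}{q} + \frac{\rho}{qn^2} < I(n^2)$; this uses only $\rho > 0$ and $I(q^k) < \frac{q}{q-1}$ from \cite{D4}. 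Then the product $\bigl(I(q^k) - (\frac{2(q-1)}{q} + \frac{\rho}{qn^2})\bigr)\bigl(I(n^2) - (\frac{2(q-1)}{q} + \frac{\rho}{qn^2})\bigr)$ is negative, and expanding it while using $I(q^k)I(n^2) = 2$ yields, exactly as before, $I(q^k) + I(n^2) > \frac{2}{(\frac{2(q-1)}{q} + \frac{\rho}{qn^2})} + \bigl(\frac{2(q-1)}{q} + \frac{\rho}{qn^2}\bigr) = l_\rho(q,n)$. The partial-fraction form $l_\rho(q,n) = \frac{3q^2-4q+2}{q(q-1)} - \frac{\rho q}{(q-1)(2qn^2-2n^2+\rho)} + \frac{\rho}{qn^2}$ is then a routine algebraic rearrangement (WolframAlpha, as in the paper).

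Next, to see that $l_\rho(q,n)$ genuinely improves on $\frac{3q^2-4q+2}{q(q-1)}$, I would check that $-\frac{\rho q}{(q-1)(2qn^2-2n^2+\rho)} + \frac{\rho}{qn^2} > 0$; clearing denominators this reduces to $q n^2(q-4) + \rho q + 2n^2 - \rho > 0$ (the $\rho=1$ case appears verbatim in the paper), which holds since $q \geq 5$ and $\rho > 0$. Then, since $g(k) = I(q^k) + I(n^2)$ is strictly decreasing (established in Subsection \ref{UpperBound}) and $g(1) - l_\rho(q,n) > 0$ — this last inequality needing only the generic bound $\sigma(n^2)/q^k > \rho$ to be consistent with $q+1 \leq \sigma(q^k) \leq \frac{2n^2}{3}$, so that $\rho$ is not too large — Theorem \ref{MainResult3} supplies a number $K'$ with $1 \leq k < K'$ and $g(K') = l_\rho(q,n)$.

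For the explicit bound, I would repeat the computation of Theorem \ref{ExplicitUpperBoundForKInTermsofqAndn} with $2qn^2 - 2n^2 + \rho$ in place of $2qn^2 - 2n^2 + 1$ and $\frac{\rho}{qn^2}$ in place of $\frac{1}{qn^2}$. Setting $g(K') = l_\rho(q,n)$, using the partial fraction decomposition $g(K') = \frac{3q^2-4q+2}{q(q-1)} + \frac{2(q-1)}{q(q^{K'+1}-1)} - \frac{1}{q^{K'}(q-1)}$, rearranging and clearing denominators, the common numerator $2qn^2 - 2n^2 + \rho - q^{K'+1}$ appears on both sides; cancelling it (the non-vanishing case) leads to $2 - \frac{2}{q} + \frac{\rho}{qn^2} = \frac{q^{K'+1}-1}{q^{K'}(q-1)}$, which lies strictly between $\frac{2(q-1)}{q}$ and $\frac{q}{q-1}$ and hence forces $\sqrt 2 < \frac{q}{q-1} \leq \frac54$, a contradiction. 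Therefore $2qn^2 - 2n^2 + \rho - q^{K'+1} = 0$, i.e. $q^{K'+1} - 1 = 2n^2(q-1) + (\rho - 1)$; and then $q^{K'} < q^{K'+1}/(q-1)$ together with $q^{K'+1} - 1 = 2n^2(q-1) + \rho - 1$ gives (after dividing by $q-1$ and using $q \geq 5$, $\rho$ moderate) $q^{K'} < 2n^2/\rho$, so $K' < \log_q 2 + 2\log_q n - \log_q \rho$.

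I expect the only real obstacle to be bookkeeping around the admissible range of $\rho$: the steps $g(1) - l_\rho(q,n) > 0$ and $q^{K'} < 2n^2/\rho$ both implicitly require $\rho$ to be bounded above (roughly $\rho \leq 2n^2(q-1)$, which is automatic whenever $\rho$ is an actual lower bound for $\sigma(n^2)/q^k$ since that ratio is itself less than $\frac{q}{q-1} \cdot \frac{n^2}{q^k} \cdot$ something small), so I would either state explicitly that $\rho$ is understood to be a valid strict lower bound for $\sigma(n^2)/q^k$, or note that the inequalities degrade gracefully. Everything else is a mechanical substitution $1 \mapsto \rho$ into proofs already in the paper.
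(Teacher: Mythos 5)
Your strategy is exactly the paper's --- substitute $\rho$ for $1$ throughout the proofs of Theorem \ref{ImprovedLowerBoundforI(q^k)+I(n^2)} and Theorem \ref{ExplicitUpperBoundForKInTermsofqAndn} --- and the first part of your argument (the negative product with $y = \frac{2(q-1)}{q} + \frac{\rho}{qn^2}$, the partial fraction decomposition, the positivity of the correction term, and the verification $g(1) - l_{\rho}(q,n) > 0$ via $\rho < \sigma(n^2)/q^k \leq 2n^2/(q+1)$) matches the paper step for step.

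There is, however, an arithmetic slip in the final computation that, as written, breaks the derivation of $K' < \log_q 2 + 2\log_q n - \log_q \rho$. When you clear denominators in
$$\frac{2(q - 1)}{q(q^{K' + 1} - 1)} - \frac{\rho}{qn^2} = \frac{1}{q^{K'}(q - 1)} - \frac{\rho q}{(q - 1)(2qn^2 - 2n^2 + \rho)},$$
the common numerator on both sides is $2n^2(q-1) + \rho - \rho q^{K'+1}$, not $2n^2(q-1) + \rho - q^{K'+1}$: putting the right-hand side over the denominator $q^{K'}(q-1)(2qn^2 - 2n^2 + \rho)$, the term $\frac{\rho q}{(q-1)(2qn^2-2n^2+\rho)}$ contributes $\rho q \cdot q^{K'} = \rho q^{K'+1}$ to the numerator. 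Your version of the vanishing condition, $q^{K'+1} = 2n^2(q-1) + \rho$, cannot yield $q^{K'} < 2n^2/\rho$; on the contrary it forces $q^{K'} \approx 2n^2(q-1)/q$, with no $\rho$-saving at all, and the parenthetical ``$\rho$ moderate'' cannot rescue this, since the interesting values of $\rho$ (e.g.\ $\rho = 3375$) are large. The correct vanishing condition is $\rho(q^{K'+1} - 1) = 2n^2(q-1)$, i.e.\ $\frac{q^{K'+1}-1}{q-1} = \frac{2n^2}{\rho}$, from which $q^{K'} < \frac{q^{K'+1}-1}{q-1} = \frac{2n^2}{\rho}$ and the stated bound follow at once, exactly as in the paper. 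Once this factor of $\rho$ is restored, your proof is complete and coincides with the paper's.
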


\begin{proof}
Abbreviate a (strict) lower bound for the quantity
$$\frac{\sigma(n^2)}{q^k}=\frac{2n^2}{\sigma(q^k)}=\gcd(n^2,\sigma(n^2))$$
by $\rho$.

By Corollary \ref{ImprovedLowerBoundforI(n^2)RHO}, we then get the bound
$$I(q^k) < \dfrac{q}{q-1} < \dfrac{2(q-1)}{q} + \dfrac{\rho}{qn^2} < I(n^2)$$
which implies that the product
$$(I(q^k) - y)(I(n^2) - y) < 0$$
is negative, where $$y = \dfrac{2(q-1)}{q} + \dfrac{\rho}{qn^2}.$$  After some careful algebraic simplifications, we get
$$I(q^k) + I(n^2) > \frac{2qn^2}{2qn^2 - 2n^2 + \rho} + \frac{2qn^2 - 2n^2 + \rho}{qn^2}$$
which has the partial fraction decomposition
$$\frac{2qn^2}{2qn^2 - 2n^2 + \rho} + \frac{2qn^2 - 2n^2 + \rho}{qn^2} = \frac{3q^2 - 4q + 2}{q(q - 1)} - \frac{\rho q}{(q - 1)(2qn^2 - 2n^2 + \rho)} + \frac{\rho}{qn^2}.$$
Therefore,
$$I(q^k) + I(n^2) > \dfrac{3q^2 - 4q + 2}{q(q - 1)} - \dfrac{\rho q}{(q - 1)(2qn^2 - 2n^2 + \rho)} + \dfrac{\rho}{qn^2}.$$

Since it is known that $\rho > 1$ holds, and that $q \geq 5$, then we also know that
$$-\frac{\rho q}{(q - 1)(2qn^2 - 2n^2 + \rho)} + \frac{\rho}{qn^2} = \frac{\rho \Bigg(qn^2 (q - 4) + \rho(q - 1) + 2n^2\Bigg)}{qn^2 (q - 1)(2n^2 (q - 1) + \rho)} > 0.$$

This means that the new lower bound
$$I(q^k) + I(n^2) > l_{\rho}(q,n)$$
where
$$l_{\rho}(q,n) = \frac{2qn^2}{2qn^2 - 2n^2 + \rho} + \frac{2qn^2 - 2n^2 + \rho}{qn^2}$$
improves on the old (and trivial) lower bound
$$I(q^k) + I(n^2) > \frac{3q^2 - 4q + 2}{q(q - 1)}.$$
Note that $l_{\rho}(q,n)$ does not contain $k$.

Lastly, we need to check that, indeed
$$g(1) - l_{\rho}(q,n) = \bigg(\frac{3q^2 + 2q + 1}{q(q + 1)}\bigg) - \bigg(\frac{3q^2 - 4q + 2}{q(q - 1)} - \frac{\rho q}{(q - 1)(2qn^2 - 2n^2 + \rho)} + \dfrac{\rho}{qn^2}\bigg)$$
is positive.  We compute
$$g(1) - l_{\rho}(q,n) = \frac{\bigg((q-3)n^2 + \rho\bigg)\bigg(2n^2 - \rho(q + 1)\bigg)}{n^2 q(q + 1) \bigg(2n^2 (q - 1) + \rho\bigg)}$$
which is indeed positive since 
$$\rho < \frac{\sigma(n^2)}{q^k} = \frac{2n^2}{\sigma(q^k)} \leq \frac{2n^2}{q + 1}$$
by assumption.

By Theorem \ref{MainResult3}, we know that there exists a number $K'$ such that $1 \leq k < K'$.

We now compute an explicit upper bound for $K'$, in terms of $q$, $n$, and $\rho$.

So here we go:  We require $g(K') = l_{\rho}(q,n)$.

But we have the partial fraction decompositions
$$g(K') = \frac{3q^2 - 4q + 2}{q(q - 1)} + \frac{2(q - 1)}{q(q^{K' + 1} - 1)} - \frac{1}{q^{K'}(q - 1)}$$
and
$$l_{\rho}(q,n) = \frac{3q^2 - 4q + 2}{q(q - 1)} - \frac{\rho q}{(q - 1)(2qn^2 - 2n^2 + \rho)} + \frac{\rho}{qn^2}.$$

Equating and rearranging as before, we obtain
$$\frac{2(q - 1)}{q(q^{K' + 1} - 1)} - \frac{\rho}{qn^2} = \frac{1}{q^{K'}(q - 1)} - \frac{\rho q}{(q - 1)(2qn^2 - 2n^2 + \rho)}.$$

After some algebraic simplifications, we get
$$\frac{2n^2 (q - 1) - \rho\bigg(q^{K' + 1} - 1\bigg)}{qn^2 \bigg(q^{K' + 1} - 1\bigg)} = \frac{2n^2 (q - 1) + \rho - \rho q^{K' + 1}}{{q^{K'}}(q - 1)(2qn^2 - 2n^2 + \rho)}.$$

Proceeding similarly as in Theorem \ref{ImprovedLowerBoundforI(q^k)+I(n^2)}, suppose to the contrary that
$$2n^2 (q - 1) - \rho\bigg(q^{K' + 1} - 1\bigg) \neq 0.$$

Then we can cancel the numerator of both sides of the equation, since
$$2n^2 (q - 1) - \rho\bigg(q^{K' + 1} - 1\bigg) = 2n^2 (q - 1) + \rho - \rho q^{K' + 1}.$$

We thus obtain
$$qn^2 \bigg(q^{K' + 1} - 1\bigg) = {q^{K'}}(q - 1)(2qn^2 - 2n^2 + \rho)$$
which can be rewritten as
$$\frac{q^{K' + 1} - 1}{{q^{K'}}(q - 1)} = \frac{2qn^2 - 2n^2 + \rho}{qn^2}.$$

But, as before, we have the estimates
$$\frac{2(q - 1)}{q} < \frac{2qn^2 - 2n^2 + \rho}{qn^2}$$
(since $\rho$ is positive), and
$$\frac{q^{K' + 1} - 1}{{q^{K'}}(q - 1)} < \frac{q}{q - 1},$$
which (again) implies that
$$\sqrt{2} < \frac{q}{q - 1},$$
contradicting
$$\frac{q}{q - 1} \leq \frac{5}{4}$$
since $q$ is the special prime satisfying $q \equiv 1 \pmod 4$ implies that $q \geq 5$.

The contradiction thus obtained means that our assumption that
$$2n^2 (q - 1) - \rho\bigg(q^{K' + 1} - 1\bigg) \neq 0$$
is untenable.  This implies that
$$\frac{2n^2}{\rho} = \frac{q^{K' + 1} - 1}{q - 1}.$$

Note that $K'$ may not be an integer.

Proceeding similarly as in Theorem \ref{ImprovedLowerBoundforI(q^k)+I(n^2)}, we have
$$q^{K'} < \frac{q^{K' + 1} - 1}{q - 1}$$
which implies that
$$q^{K'} < \frac{2n^2}{\rho}.$$

We finally obtain the upper bound
$$K' < \log_q {2} + 2\log_q {n} - \log_q {\rho}.$$

\end{proof}

\begin{remark}
Per \url{https://math.stackexchange.com/a/4028814/28816}, the best currently known lower bound for $\sigma(n^2)/q^k$ is
$$\frac{\sigma(n^2)}{q^k} \geq {3^3} \times {5^3} = 3375.$$
\end{remark}

\section{Concluding Remarks and Further Research}

It may be possible to compute for an approximate value for $K$ or $K'$ from the equations 
$$\frac{q^{K+1} - 1}{q - 1}=2n^2$$
and
$$\frac{q^{K'+1} - 1}{q - 1}=\frac{2n^2}{\rho}$$
using numerical methods.

We leave these considerations and the problems they spur for the resolution of other researchers.

\vskip 20pt
\noindent {\bf Acknowledgements}
\vskip 20pt

The authors are indebted to the anonymous referees whose valuable feedback helped in improving the quality of this manuscript. \linebreak The second author is also grateful to the anonymous MSE user mathlove \linebreak (\url{https://math.stackexchange.com/users/78967}) for patiently answering his inquiries \cite{MD} in this Q\&A site.  The second author would also like to dedicate the proof of Theorem \ref{ImprovedLowerBoundforI(q^k)+I(n^2)} to Doug Iannucci, who introduced him to the said problem in the year 2007, while he was in the process of writing his M.~Sc. thesis.

\end{document}